\newcommand{\ind}{\mathbf{1}}
\newtheorem{theorem}{Theorem}
\newtheorem{corollary}[theorem]{Corollary}
\newtheorem{proposition}[theorem]{Proposition}
\def\p{\mathbb{P}}
\def\ind{\mbox{\rm 1\hspace{-0.04in}I}}
\newcommand{\formula}[2][nolabel]
{\ifthenelse{\equal{#1}{nolabel}}
 {\begin{align*} #2 \end{align*}}
 {\ifthenelse{\equal{#1}{}}
  {\begin{align} #2 \end{align}}
  {\begin{align} \label{#1} #2 \end{align}}
 }
}
\numberwithin{equation}{section}
\title[Exponentiality of first passage times]{Exponentiality of first passage times of continuous time Markov chains}
\author{Romain Bourget \and Lo\"ic Chaumont \and Natalia Sapoukhina}
\address{R. Bourget$^{1,2,3,4}$ -- \rm bourget@math.univ-angers.fr}
\address{L. Chaumont$^4$ -- \rm loic.chaumont@univ-angers.fr}
\address{N. Sapoukhina$^{1,2,3}$ -- \rm natalia.sapoukhina@angers.inra.fr}
\address{ $^1$ INRA, UMR1345 Institut de Recherche en Horticulture et Semences -- IRHS, SFR 4207, 
PRES UNAM, 42 rue Georges Morel, F-49071 Beaucouz\'e Cedex, France}
\address{$^2$ AgroCampus-Ouest, UMR1345 Institut de Recherche en Horticulture et Semences -- IRHS, F-49045 Angers, France}
\address{$^3$ Universit\'e d'Angers, UMR1345 Institut de Recherche en Horticulture et Semences -- IRHS, F-49045 Angers, France}
\address{$^4$ LAREMA UMR CNRS 6093, Universit\'e d'Angers, 2, Bd Lavoisier \\
Angers Cedex 01, 49045, France}
\keywords{First passage time, exponential decay, quasi stationary distribution.}
\subjclass[2010]{92D25 \and 60J28}
\thanks{This work was supported by MODEMAVE research project from the R\'egion
Pays de la Loire and by Department of "Sant\'e des Plantes et Environnement", INRA}
\date{\today}
\begin{document}


\begin{abstract} Let $(X,\p_x)$ be a continuous time Markov chain with finite or countable state space $S$ and let $T$ be
its first passage time in a subset $D$ of $S$. It is well known that if $\mu$ is a quasi-stationary distribution relative
to $T$, then this time is exponentially distributed under $\p_\mu$. However, quasi-stationarity is not a necessary condition.
In this paper, we determine more general conditions on an initial distribution $\mu$ for $T$ to be exponentially
distributed under $\p_\mu$. We show in addition how quasi-stationary distributions can be expressed in terms of
any initial law which makes the distribution of $T$ exponential. We also study two examples in branching processes where
exponentiality does imply quasi-stationarity.
\end{abstract}

\maketitle

\section{Introduction}\label{int}

Let us denote by $P(t)=\{p_{ij}(t):i,j\in S\}$, $t\ge0$ the transition probability of a continuous time irreducible Markov chain
$X=\{(X_t)_{t\ge0},(\p_i)_{i\in S}\}$, with finite or countable state space $S$ and let $Q=\{q_{ij}:i,j\in S\}$ be
the associated $q$-matrix, that is $q_{ij}=p'_{ij}(0)$. We assume that $Q$ is conservative, that is $\sum_{j\in S}q_{ij}=0$,
for all $j\in S$, and that $X$ is not explosive. The transition probability (that will also be called the transition semigroup)
of $X$ satisfies the backward Kolmogorov's equation:
\begin{equation}\label{backward}
\frac{d}{dt}p_{ij}(t)=\sum_{k\in S}q_{ik}p_{kj}(t)\,.
\end{equation}
Let $D\subset S$ be some domain and define the first passage time by $X$ in $D$ by,
\begin{equation}\label{3178}
T=\inf\{t\ge0:X_t \in D\}\,.
\end{equation}
This work aims at characterizing  probability measures $\mu$ on $E=S\setminus D$ such that under $\p_{\mu}$,
the time  $T$ is exponentially distributed, that is, there exists $\alpha>0$, such that:
\begin{equation}\label{exponential}
\p_{\mu}(T>t)=e^{- \alpha t}\,.
\end{equation}
It is well known that when $\mu$ is a quasi-stationary distribution with respect to $T$, that is if
\begin{equation}\label{quasistat}
\p_{\mu}(X_t=i\,|\,T>t)=\mu_i\,,\;\;\;\mbox{for all $i\in E$ and $t\ge0$,}
\end{equation} then (\ref{exponential}), for some value $\alpha>0$, follows from a simple application of the Markov property,
see \cite{la} or \cite{ma} for example. Quasi-stationarity of $\mu$ holds if and only if $\mu$ is a left eigenvector
of the $q$-matrix of the process $X$ killed at time $T$, associated to the eigenvalue $-\alpha$, see \cite{np}.
However, quasi-stationarity is not necessary to obtain (\ref{exponential}). Some examples of non quasi-stationary distribution $\mu$
such that (\ref{exponential}) holds are given later on in this paper.\\

Our work was first motivated by population dynamics, where it is often crucial to determine the extinction time of a population or
the emergence time of a new mutant, see \cite{na,imn,cm,bcs2} for example. In many situations, those times
can be represented as first passage times of Markov processes in some particular domain. Then it is often much easier to find an initial
distribution, under which this first passage time is exponentially distributed than to compute its distribution under
any initial conditions.\\

Let us be more specific about applications to emergence times in biology which is the central preoccupation of the authors
in \cite{bcs2}. Adaptation to a new environment occurs by the emergence of new mutants. In adaptation theory, emergence can be described by
the estimation of the fixation time of an allele in the population. We may also imagine a parasite infecting a resistant or new host,
a pathogen evading chemical treatment, a cancer cell escaping from chemotherapy, etc. \cite{imn,ko,sh,hla}. An interesting and important
point is to estimate the law of the time at which these new mutant individuals emerge in the population, for example to estimate the
durability or the success probability of a new treatment or a new resistance. The emergence problem has already been considered in the
setting of branching processes \cite{la2,se,sh,acl}, for multitype Moran models in \cite{sc,dss,dm}, and for competition processes, in \cite{bcs2}.
In order to explain the latter case in more detail, let us recall that a competition process is a continuous time Markov chain $X=(X^{(1)},\dots,X^{(d)})$ 
with state space $S=\mathbb{N}^d$, for $d\ge2$, whose transition probabilities only allow jumps to certain nearest neighbors. Competition processes 
were introduced by Reuter \cite{re} as the natural extensions of birth and death processes and are often 
involved in epidemic models \cite{cl,hla,ko}. In \cite{bcs2}, the authors were interested in some estimation of the law of the first passage 
time $T$, when an individual of type $r$, $1\le r\le d$, first emerges from the population, that is
\[T=\inf\{t\ge0:X_t^{(r)}=1\}\,.\]
Then varying the birth, mutation, migration and death rates, some simulations of the law of the time $T$
allowed us to conclude that the consideration of interactions among two stochastic evolutionary forces,
mutation and migration, can expand our understanding of the adaptation process at the population
level. In particular, it showed under which conditions on mutation and migration rates, the pathogen can adapt swiftly to a
given multicomponent treatment.\\

This paper is organized as follows. In section 2, we establish a general criterion for a measure $\mu$ to satisfy (\ref{exponential})
and we study the connections between such measures and quasi-stationary or quasi-limiting distributions. Then, in the third section,
we give some sufficient conditions for (\ref{exponential}) involving the special structure of the chain on a partition of the state
space $E$.  In particular, Theorem \ref{th3} and its consequences allow us to provide some examples where exponentiality may hold without
quasi-stationarity. An example of application in adaptation theory is provided in Subsection \ref{ept1}. The fourth section is devoted to the 
presentation of some examples in the setting of branching processes where exponentiality implies quasi-stationarity.

\section{From exponentiality to quasi stationarity}\label{section2}

\noindent We first introduce the killed process at time $T$, as follows:
\begin{equation}\label{killedproc}
X^T_t=\left\{\begin{array}{ll}
X_t\,,\;\;\mbox{if $t<T$,}\\
\Delta\,,\;\;\mbox{if $t\ge T$,}
\end{array}\right.
\end{equation}
where $\Delta$ is a cemetery point. Then $X^T$ is a continuous time Markov chain which is valued in
$E_\Delta:=E\cup\{\Delta\}$. Moreover if we define the killing rate by
\begin{equation}\label{2035}
\eta_i=\sum_{j\in D}q_{ij}\,,
\end{equation}
then the $q$-matrix $Q^T=(q^T_{ij})$ of $X^T$ is given by
\begin{equation}\label{qmatrix}
q^T_{ij}=\left\{\begin{array}{ll}
q_{ij}\,,\;\;\;i,j\in E\\
q_{i\Delta}=\eta_i\,,\;\;\;i\in E\\
q_{\Delta j}=0\,,\;\;\;j\in E_{\Delta}\,.\end{array}
\right.
\end{equation}
From our assumptions, $Q^T$ is obviously conservative and $X^T$ is non explosive. In particular,
$Q^T$ is the $q$-matrix of a unique transition probability that
we will denote by $P^T(t)=(p_{ij}^T(t))_{i,j\in E_\Delta}$, $t\ge0$, and which is expressed as
\begin{equation}
p_{ij}^T(t)=\left\{\begin{array}{lll}
\p_{i}(X_t=j,t<T)\,,&\mbox{if $i,j\in E$,}\\
\p_{i}(t\ge T)\,,&\mbox{if $i\in E$ and $j=\Delta$,}\\
1_{j=\Delta}\,,&\mbox{if $i=\Delta$ and $j\in E_\Delta$.}
\end{array}\right.
\end{equation}
Then this semigroup inherits the Kolmogorov backward equation from (\ref{backward}):
\begin{equation}\label{1635}
\frac{d}{dt}p_{ij}^T(t)=\sum_{k\in E_\Delta}q^T_{ik}p_{kj}^T(t)\,.
\end{equation}
Henceforth, all  distributions $\nu$ on $E_\Delta$ that will be considered will not charge the state $\Delta$,
i.e. $\nu_\Delta=0$.
In this section, we shall often consider initial distributions $\mu=(\mu_i)_{i\in E_\Delta}$ for $(X_t^T)$, on $E_\Delta$
satisfying the following differentiability condition:
\begin{equation}\label{8135}
\mbox{$\mu P^T(t)$ is differentiable and $\displaystyle\frac d{dt}\mu P^T(t)=\mu\frac d{dt}P^T(t)$, $t>0$}\,.
\end{equation}
We extend the family of probabilities $(\p_i)_{i\in E}$ to $i=\Delta$, in accordance with the definition of $(P^T(t))$
and for each $t\ge0$, we define the probability distribution $\mu(t)$ on $E_\Delta$ as follows:
\begin{equation}\label{2147}
\mu_i(t)=\mathbb{P}_{\mu}(X_t^T=i\,|\,T>t)\,,\;\;i\in E_\Delta\,.
\end{equation}
We define the vector $\delta$ by $\delta_i=0$, if $i\in E$ and $\delta_\Delta=1$.

\begin{theorem}\label{main}
Let $\mu$ be a distribution on $E_\Delta$.
\begin{itemize}
\item[$(i)$] Assume that $\mu$ satisfies condition $(\ref{8135})$,
then there is $\alpha>0$ such that $\mathbb{P}_{\mu}(T>t)=e^{-\alpha t}$, for all $t\ge0$ if and only if
\begin{equation}\label{eqadd3}
\mbox{$\mu(t)$ is differentiable and $\displaystyle\mu'(t)=e^{\alpha t} (\mu Q^T+ \alpha(\mu-\delta))P^T(t)$, $t>0$.}
\end{equation}
\item[$(ii)$] Assume that there is $\alpha>0$ such that $\mathbb{P}_{\mu}(T>t)=e^{-\alpha t}$, for all $t\ge0$, then conditions
$(\ref{8135})$ and $(\ref{eqadd3})$  are equivalent.
\item[$(iii)$]  When $(\ref{eqadd3})$ is satisfied, the rate $\alpha$ may be expressed as
\begin{equation}\label{alpha}\alpha=\sum_{i\in E}\eta_i\mu_i\,.
\end{equation}
\end{itemize}
\end{theorem}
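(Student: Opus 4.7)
The plan hinges on the elementary decomposition
\[
\mu P^T(t) = F(t)\,\mu(t) + (1-F(t))\,\delta, \qquad F(t):=\p_\mu(T>t),
\]
valid because for $i\in E$ the event $\{X_t^T=i\}$ forces $T>t$, while the $\Delta$-coordinate absorbs the killed mass. First I would introduce the candidate
\[
G(t):= e^{\alpha t}\,\mu P^T(t) - (e^{\alpha t}-1)\,\delta
\]
and record the key identity $\delta P^T(t)=\delta$, which holds because $\Delta$ is absorbing, so $p^T_{\Delta j}(t)=\ind_{j=\Delta}$. Differentiating $G$ under hypothesis (\ref{8135}) and using (\ref{1635}) in the form $\frac{d}{dt}\mu P^T(t)=\mu Q^T P^T(t)$, one immediately obtains
\[
G'(t) = e^{\alpha t}\,(\mu Q^T + \alpha(\mu-\delta))\,P^T(t),
\]
which is exactly the right-hand side of (\ref{eqadd3}).

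For part (i), I would note that by the decomposition, $F(t)=e^{-\alpha t}$ is equivalent to the identity $\mu(t)=G(t)$. Under (\ref{8135}) the function $G$ is differentiable, so if $F(t)=e^{-\alpha t}$ then $\mu(t)$ inherits differentiability and (\ref{eqadd3}) is just the computed formula for $G'$. Conversely, if (\ref{eqadd3}) holds then $\mu(t)-G(t)$ has zero derivative on $(0,\infty)$; combined with $\mu(0)=\mu=G(0)$ (since $\mu_\Delta=0$ gives $F(0)=1$), this forces $\mu(t)=G(t)$ for all $t$, and reading the $\Delta$-coordinate yields $0=e^{\alpha t}(1-F(t))-(e^{\alpha t}-1)$, i.e.\ $F(t)=e^{-\alpha t}$.

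For part (ii), the implication (\ref{8135})$\Rightarrow$(\ref{eqadd3}) under $F(t)=e^{-\alpha t}$ is already in (i). For the reverse, assuming (\ref{eqadd3}) together with $F(t)=e^{-\alpha t}$, the decomposition rewrites as $\mu P^T(t)=e^{-\alpha t}\mu(t)+(1-e^{-\alpha t})\delta$, which is differentiable since $\mu(t)$ is. Substituting $\mu'(t)$ from (\ref{eqadd3}) and simplifying using $\delta P^T(t)=\delta$ collapses the derivative to $\frac{d}{dt}\mu P^T(t)=\mu Q^T P^T(t)$, which equals $\mu\,\frac{d}{dt}P^T(t)$ by (\ref{1635}), proving (\ref{8135}).

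For part (iii), parts (i) and (ii) together show that whenever (\ref{eqadd3}) holds, both $F(t)=e^{-\alpha t}$ and the differentiability hypothesis (\ref{8135}) are in force. Writing $F(t)=1-\sum_{i\in E}\mu_i\,p^T_{i\Delta}(t)$, condition (\ref{8135}) legitimises term-by-term differentiation at $t=0$, and combining $\frac{d}{dt}p^T_{i\Delta}(0)=q^T_{i\Delta}=\eta_i$ with $F'(0)=-\alpha$ gives (\ref{alpha}). The only delicate point is bookkeeping: tracking which of (\ref{8135}) and (\ref{eqadd3}) is available at each stage, and exploiting the absorbing identity $\delta P^T(t)=\delta$ to align the algebraic shapes of $G'(t)$ and $\frac{d}{dt}\mu P^T(t)$ so that the two expressions for $\mu(t)$ match.
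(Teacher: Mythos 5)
Your proposal is correct and follows essentially the same route as the paper: the identical decomposition $\mu P^T(t)=\p_\mu(T>t)\,\mu(t)+(1-\p_\mu(T>t))\,\delta$, the backward equation under hypothesis (\ref{8135}), and the absorbing identity $\delta P^T(t)=\delta$, with part (iii) amounting to the same evaluation of the $\Delta$-coordinate near $t=0$ that the paper performs on (\ref{eqadd3}). The only (cosmetic) difference is that packaging the algebra into $G(t)=e^{\alpha t}\mu P^T(t)-(e^{\alpha t}-1)\delta$ and arguing that $\mu(t)-G(t)$ has vanishing derivative makes the converse implication in part (i) a little tidier than the paper's ``integrate (\ref{4925}) to recover (\ref{eqadd1})'' step.
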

\begin{proof}
Note that the condition $\mathbb{P}_{\mu}(T>t)=e^{-\alpha t}$ is equivalent to
$\mathbb{P}_{\mu}(X_t^T=i,t<T)=e^{-\alpha t}\mu_i(t)$. Therefore, since
\[\mathbb{P}_{\mu}(X^T_t=i)=\p_{\mu}(X_t^T=i,t<T)+\ind_{i=\Delta}\p_{\mu}(t\ge T)\,,\]
the transition function $P^T(t)$ of $X^T$ satisfies
\begin{equation}\label{eqadd1}
\mu P^T(t)=e^{-\alpha t}\mu(t)+(1-e^{-\alpha t})\delta\,.
\end{equation}
Then from the differentiability condition (\ref{8135}), we see that $\mu(t)$ is differentiable and from the Kolmogorov backward
equation (\ref{1635}), we obtain
\begin{eqnarray}
\mu\frac d{dt}P^T(t)&=&-\alpha e^{-\alpha t}\mu(t)+ e^{-\alpha t} \mu'(t)+\alpha e^{-\alpha t}\delta\nonumber\\
&=&\mu Q^TP^T(t)\,.\label{4925}
\end{eqnarray}
Then from (\ref{eqadd1}), we have $e^{-\alpha t} \mu(t) = \mu P^T(t)-(1-e^{- \alpha t})\delta$ and since $\delta P^T(t)=\delta$,
for all $t\ge 0$, we see that equation (\ref{4925}) may be expressed as
\[\mu'(t)=e^{\alpha t} (\mu Q^T+ \alpha(\mu-\delta))P^T(t)\,,\;\;\;t\ge0\,.\]
Conversely, if condition (\ref{eqadd3}) is satisfied, then from (\ref{8135}), we can write equation (\ref{4925}).
Integrating this expression, we get (\ref{eqadd1}) which implies that $\mathbb{P}_{\mu}(T>t)=e^{-\alpha t}$, for all $t\ge0$.
The first assertion of the theorem is proved.

Now if $\mathbb{P}_{\mu}(T>t)=e^{-\alpha t}$, for all $t\ge0$, then we have (\ref{eqadd1}), so that if condition $(\ref{8135})$ is
satisfied, then $\mu(t)$ is differentiable and
\begin{equation}\label{5821}
\frac d{dt}\mu P^T(t)=-\alpha e^{-\alpha t}\mu(t)+ e^{-\alpha t} \mu'(t)+\alpha e^{-\alpha t}\delta\,.
\end{equation}
Moreover from the Kolmogorov backward equation and (\ref{5821}), we have
$\mu Q^TP^T(t)=-\alpha e^{-\alpha t}\mu(t)+ e^{-\alpha t} \mu'(t)+\alpha e^{-\alpha t}\delta$, which is $(\ref{eqadd3})$. The
converse is easily derived from similar arguments, so the second assertion is proved.

Then from equation (\ref{eqadd3}), we obtain
\begin{equation}\label{5478}
\lim_{t\rightarrow0}\mu'(t)= (\mu Q^T+ \alpha(\mu-\delta))P^T(0)\,.
\end{equation}
On the other hand, note that $\mu_\Delta(t)=0$, for all $t\ge0$, so that in particular $\mu_\Delta=\mu_\Delta(0)=0$ and
$\lim_{t\rightarrow0}\mu'_\Delta(t):=\mu_\Delta'(0)=0$. Finally, taking equality (\ref{5478}) at $\Delta$ yields
\[\mu Q^T_\Delta=\sum_{i\in E_\Delta} \mu_i q^T_{i\Delta} = \sum_{i\in E} \mu_i \eta_i= \mu'_\Delta(0) -
\alpha(\mu_\Delta-\delta_\Delta)=\alpha\,,\]
which proves the third assertion of the theorem.\\
\end{proof}
\noindent Note that the equality in (\ref{eqadd3}), once restricted to the set $E$ can be simplified as  
$\displaystyle\mu'(t)=e^{\alpha t}\mu(Q^T+ \alpha I)P^T(t)$,  which highlights the importance of the operator $Q^T+\alpha I$. 
This also applies to the next results.\\

\noindent {\bf Remarks.} {\it $1.$ It is important to note that a distribution $\mu$ on $E_\Delta$ may satisfy
$\p_\mu(T>t)=e^{-\alpha t}$, $t\ge0$, whereas $(\ref{8135})$ does not hold. Examples are given in the remark 
after Theorem $\ref{th3}$.

\noindent $2.$ When $E$ is finite, condition $(\ref{8135})$ is clearly satisfied. In the infinite case,
this condition may appear theoretical to some extend and sometimes difficult to check when not much is known on the transition
probability. However it is possible to obtain quite simple conditions implying $(\ref{8135})$. For instance, observe that from
$(\ref{1635})$, for all $i,j\in E_\Delta$ and $t>0$,
\begin{eqnarray}
\left|\frac{d}{dt}p_{ij}^T(t)\right|&\le&\sum_{k\in E_\Delta}\left|q^T_{ik}p_{kj}^T(t)\right|\nonumber\\
&\le&\sum_{k\in E_\Delta}|q^T_{ik}|\nonumber\\
&=&-2q^T_{ii}\,.\label{1755}
\end{eqnarray}
A sufficient condition for $(\ref{8135})$ to hold is then
\begin{equation}\label{8105}
\sum_{i\in E}q_i\mu_i<\infty\,,
\end{equation}
where $q_i=-q_{ii}^T$. The latter condition is satisfied in particular when the $q_i$'s are bounded.}\\

Recall definition (\ref{quasistat}) of quasi-stationarity. In our setting, it is equivalent to the following statement:
a distribution $\mu$ on $E_\Delta$, is quasi-stationary if
\begin{equation}\label{6967}
\mu_i=\mu_i(t)\,,\;\;\;\mbox{for all $t\ge0$ and $i\in E_\Delta$.}
\end{equation}
We will simply say that $\mu$ is a quasi-stationary distribution. Then,
let us state the following classical result, already mentioned in the introduction.
\begin{theorem}[\cite{pvj}]\label{pvj}
A distribution $\mu$ on $E_\Delta$ is quasi-stationary if and only if the equation
\begin{equation}\label{6457}
\mu Q^T=-\alpha \mu + \alpha\delta \,,
\end{equation}
holds for some $\alpha>0$.
$($Note that $(\ref{6457})$ is equivalent to $\mu Q^T_i=-\alpha \mu_i$, for all $i\in E$.$)$
\end{theorem}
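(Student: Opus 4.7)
My plan is to pivot on the intermediate identity
\begin{equation}\label{planstar}
\mu P^T(t)=e^{-\alpha t}\mu+(1-e^{-\alpha t})\delta,\qquad t\ge 0,
\end{equation}
and to show that both quasi-stationarity (that is, $\mu(t)\equiv\mu$) and the eigenequation $\mu Q^T=-\alpha\mu+\alpha\delta$ are each equivalent to (\ref{planstar}) for some $\alpha>0$, thereby chaining them together.

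The equivalence between (\ref{planstar}) and quasi-stationarity rests on identity (\ref{eqadd1}). Reading the $\Delta$-component of (\ref{planstar}) gives $\p_\mu(T>t)=e^{-\alpha t}$, after which (\ref{eqadd1}) restricted to the components in $E$ forces $\mu(t)=\mu$. Conversely, assuming $\mu(t)\equiv\mu$, the standard Markov-property argument (condition at time $t$ on $\{T>t\}$, so that $X_t$ has law $\mu$) produces the functional equation $\p_\mu(T>t+s)=\p_\mu(T>t)\p_\mu(T>s)$, hence $\p_\mu(T>t)=e^{-\alpha t}$ for some $\alpha>0$ (using the implicit non-degeneracy $\p_\mu(T<\infty)>0$), and (\ref{eqadd1}) then yields (\ref{planstar}).

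For the equivalence between (\ref{planstar}) and the eigenequation, the direction from the eigenequation to (\ref{planstar}) is straightforward: left-multiplying the backward equation (\ref{1635}) by $\mu$ and using $\mu Q^T=-\alpha\mu+\alpha\delta$ together with $\delta P^T(t)=\delta$ (since $\Delta$ is absorbing) produces the linear ODE $\frac{d}{dt}\mu P^T(t)=-\alpha\mu P^T(t)+\alpha\delta$, whose unique solution with initial condition $\mu$ is (\ref{planstar}). The reverse direction consists in differentiating (\ref{planstar}) at $t=0^+$: the right-hand side is analytic with derivative $-\alpha\mu+\alpha\delta$, and one must match this with $\mu Q^T$ by computing the derivative of the left-hand side componentwise.

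The main obstacle is precisely this last step, since Theorem~\ref{pvj} does not assume the differentiability condition (\ref{8135}). I would bypass it through the integrated forward Kolmogorov equation $p^T_{ij}(t)-\delta_{ij}=\int_0^t\sum_k p^T_{ik}(s)q^T_{kj}\,ds$: after multiplying by $\mu_i$, summing over $i$, and identifying the left-hand side via (\ref{planstar}), the interchange of $\sum_i\mu_i$ with $\lim_{t\to 0^+}t^{-1}(\cdot)$ is justified by monotone convergence applied separately to the positive and negative parts of $q^T_{kj}$. This yields $[\mu Q^T]_j=-\alpha\mu_j$ for $j\in E$ and $[\mu Q^T]_\Delta=\alpha$, which is exactly the eigenequation.
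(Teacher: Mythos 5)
First, a point of reference: the paper does not prove this statement at all — it is imported from \cite{pvj}, with the remark that the equivalence holds because $P^T$ satisfies the Kolmogorov forward equation under the standing assumptions, and that under the extra condition (\ref{8135}) it would follow from Theorem \ref{main}. So you are attempting more than the paper does, and your argument breaks exactly where the cited literature does its real work. Your direction ``quasi-stationarity $\Rightarrow$ eigenequation'' is fine: pivoting on the identity $\mu P^T(t)=e^{-\alpha t}\mu+(1-e^{-\alpha t})\delta$ and extracting $\mu Q^T$ from the integrated forward equation, splitting off the single negative term $k=j$ and applying Tonelli to the remaining nonnegative ones, genuinely avoids (\ref{8135}) and yields $(\mu Q^T)_j=-\alpha\mu_j$ for $j\in E$ and $(\mu Q^T)_\Delta=\alpha$.

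The gap is in the converse, which you dismiss as ``straightforward''. Left-multiplying the backward equation (\ref{1635}) by $\mu$ requires two interchanges that are not free when $E$ is countable: the identity $\frac{d}{dt}\mu P^T(t)=\mu\frac{d}{dt}P^T(t)$ is precisely condition (\ref{8135}), which Theorem \ref{pvj} does not assume (and which cannot be imported from Corollary \ref{1832} without circularity, since that corollary is itself deduced from Theorem \ref{pvj}); and the associativity $\mu\,(Q^TP^T(t))=(\mu Q^T)\,P^T(t)$ requires absolute convergence of $\sum_{i,k}\mu_i|q^T_{ik}|p^T_{kj}(t)$, which is controlled by $\sum_i\mu_i q_i$ — again not assumed. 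Without both, you cannot derive the ODE $\frac{d}{dt}\mu P^T(t)=-\alpha\mu P^T(t)+\alpha\delta$. The natural repair in the spirit of your other direction — observe that both $\mu P^T(t)$ and $e^{-\alpha t}\mu+(1-e^{-\alpha t})\delta$ solve the integral equation $f_j(t)=\mu_j+\int_0^t(f(s)Q^T)_j\,ds$ — founders on uniqueness: this infinite linear system admits no Gronwall bound when the $q_i$ are unbounded, and from the eigenequation alone one only obtains a subinvariance inequality for the minimal semigroup; upgrading it to equality is the substance of \cite{pvj} and \cite{np}. Either add a hypothesis such as $\sup_i q_i<\infty$ or $\sum_i\mu_i q_i<\infty$ (after which your ODE argument, or Theorem \ref{main} with $\mu'(t)\equiv 0$, closes the loop), or defer this implication to \cite{pvj} as the paper does. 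In the finite-state case your whole argument is correct as written.
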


\noindent In \cite{pvj} it is proved that (\ref{6457}) is equivalent to the fact that $P^T$ satisfies the 
Kolmogorov forward equation, which is the case under our assumptions, that is
\begin{equation}\label{1735}
\frac{d}{dt}p_{ij}^T(t)=\sum_{k\in E}p_{ik}^T(t)q^T_{kj}\,.
\end{equation}
Knowing condition (\ref{8135}), Theorem \ref{pvj} easily follows from an application of the 
Kolmogorov backward equation. Actually, under this assumption Theorem \ref{pvj} can be derived from  Theorem \ref{main}. 
As a consequence of both these theorems we also obtain that  $(\ref{8135})$ holds whenever $\mu$  is quasi-stationary.
\begin{corollary}\label{1832}
If $\mu$ is a quasi-stationary distribution then condition $(\ref{8135})$ holds.
\end{corollary}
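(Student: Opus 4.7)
The plan is to deduce $(\ref{8135})$ from part $(ii)$ of Theorem~\ref{main}. Let $\mu$ be quasi-stationary. By Theorem~\ref{pvj} there exists $\alpha>0$ with $\mu Q^T = -\alpha\mu + \alpha\delta$, and the definition $(\ref{6967})$ of quasi-stationarity gives $\mu(t) \equiv \mu$, so $\mu(t)$ is trivially differentiable with $\mu'(t)=0$. Moreover $\mu Q^T + \alpha(\mu-\delta) = 0$, so the right-hand side of $(\ref{eqadd3})$ vanishes identically. Thus, as soon as the exponentiality
\[
\p_\mu(T > t) = e^{-\alpha t}, \qquad t \ge 0,
\]
is established with this same $\alpha$, equation $(\ref{eqadd3})$ holds trivially and Theorem~\ref{main}$(ii)$ yields $(\ref{8135})$.

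The substantive step is therefore to prove exponentiality with rate exactly $\alpha$. Set $f(t):=\p_\mu(T>t)$. By quasi-stationarity, for every $k\in E$,
\[
(\mu P^T(t))_k \;=\; \p_\mu(X_t = k,\,T > t) \;=\; \mu_k\, f(t).
\]
I would then apply the Kolmogorov forward equation $(\ref{1735})$ at the $\Delta$-coordinate,
\[
\frac{d}{dt} p^T_{i\Delta}(t) \;=\; \sum_{k\in E} \eta_k\, p^T_{ik}(t),
\]
in which every summand is non-negative. Multiplying by $\mu_i$, summing over $i$, and applying Tonelli's theorem,
\[
\sum_i \mu_i \frac{d}{dt} p^T_{i\Delta}(t) \;=\; \sum_{k\in E} \eta_k (\mu P^T(t))_k \;=\; f(t) \sum_{k\in E} \eta_k \mu_k \;=\; \alpha f(t),
\]
the last equality being the $\Delta$-component of Theorem~\ref{pvj}, namely $\sum_{i\in E}\mu_i\eta_i = (\mu Q^T)_\Delta = \alpha$ (which is also formula $(\ref{alpha})$). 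Integrating from $0$ to $t$ and exchanging sum and integral once more by Tonelli,
\[
1-f(t) \;=\; \sum_i \mu_i p^T_{i\Delta}(t) \;=\; \int_0^t \alpha f(s)\, ds,
\]
so $f'(t) = -\alpha f(t)$ and hence $f(t) = e^{-\alpha t}$, as required.

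The main obstacle I anticipate is exactly this identification of the exponential rate with the $\alpha$ furnished by Theorem~\ref{pvj}. A direct attempt through the backward equation reduces the problem to swapping $\sum_i \mu_i \sum_k q^T_{ik} p^T_{kj}(t)$, which is not obviously justified without the stronger summability condition $(\ref{8105})$. Working at the $\Delta$-coordinate of the forward equation keeps every summand non-negative and allows Tonelli to do all the work, without any extra hypothesis on $\mu$. Combining this with the trivial verification of $(\ref{eqadd3})$ described in the first paragraph and invoking Theorem~\ref{main}$(ii)$ completes the proof.
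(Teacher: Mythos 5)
Your proposal is correct and follows essentially the same route as the paper: observe that $\mu(t)\equiv\mu$ so $\mu'(t)=0$, that Theorem~\ref{pvj} makes the right-hand side of (\ref{eqadd3}) vanish, and then invoke Theorem~\ref{main}$(ii)$ to get (\ref{8135}). The only difference is that you prove the exponentiality $\p_\mu(T>t)=e^{-\alpha t}$ with the rate $\alpha$ from (\ref{6457}) in detail (via the $\Delta$-coordinate of the forward equation and Tonelli), where the paper simply cites this as a well-known consequence of the Markov property; that added precision about identifying the rate is welcome but does not change the method.
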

\begin{proof} If $\mu$ is quasi-stationary, then it follows from  (\ref{6967}) and the Markov property that
$\mathbb{P}_{\mu}(T>t)=e^{-\alpha t}$, for some $\alpha>0$ (this fact is well known, see \cite{ma}, for instance). Moreover
the function $\mu(t)$ is differentiable and $\mu'(t)=0$, for all $t\ge0$. On the other hand, from Theorem \ref{pvj},
equation (\ref{6457}) holds. Therefore, condition (\ref{eqadd3})
holds, so that (\ref{8135}) is satisfied from part $(ii)$ of Theorem \ref{main}.
\end{proof}
A distribution $\pi$ on $E_\Delta$ is called the quasi-limiting distribution (or the Yaglom limit) of a distribution
$\mu$ on $E_\Delta$, if it satisfies
\begin{equation}\label{5644}
\lim_{t\rightarrow\infty}\p_{\mu}(X_t^T=i\,|\,T>t)=\pi_i\,,\;\;\mbox{for all $\;i\in E_\Delta$.}
\end{equation}
Then a well known result asserts that any quasi-limiting distribution is also a quasi-stationary distribution,
see for example \cite{la,ma,me}. Recall also that if $\pi$ is the quasi-limiting distribution of some distribution $\mu$,
then the rate $\alpha$ satisfying (\ref{exponential}) is given by the expression
\begin{equation}\label{5614}
\alpha=\inf\left\{a\ge0:\int_0^\infty e^{at}P_{i}(T>t)\,dt=\infty\right\}>0\,,
\end{equation}
which does not depend on the state $i\in E$, see Section 3 in \cite{jr} for instance. 
As an application of Theorem \ref{main} and the above remarks, we show in the next corollary how to construct quasi-stationary 
distributions from distributions satisfying (\ref{exponential}).

\begin{corollary}\label{coadd2}
Let $\mu$ be a distribution on $E_\Delta$ such that $\p_\mu(T>t)=e^{-\alpha t}$, $t\ge0$,
for some $\alpha>0$ and satisfying $(\ref{8135})$. If $\mu$ admits a quasi-limiting distribution, $\pi$, then the latter is
given by:
\[\pi=\mu+\int_0^{\infty} (\mu Q^T+ \alpha(\mu-\delta)) e^{\alpha t} P^T(t)\,dt\,,\]
where $\int_0^{\infty} (\mu Q^T+ \alpha(\mu-\delta)) e^{\alpha t} P^T(t)\,dt$ should be understood as a possibly improper
integral. In particular, $\pi$ is a quasi-stationary distribution on $E_\Delta$.
\end{corollary}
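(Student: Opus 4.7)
The plan is to apply Theorem \ref{main}(ii) to derive the differential equation for $\mu(t)$ and then integrate it, using the quasi-limiting hypothesis to pass to the limit. Since $\p_\mu(T>t)=e^{-\alpha t}$ and $\mu$ satisfies $(\ref{8135})$, part $(ii)$ of Theorem \ref{main} gives that $(\ref{eqadd3})$ holds, that is,
\[\mu'(t)=e^{\alpha t}\bigl(\mu Q^T+\alpha(\mu-\delta)\bigr)P^T(t),\qquad t>0.\]

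First I would identify the initial value: since $\mu_\Delta=0$, we have $T>0$ $\p_\mu$-a.s., so that $\mu_i(0)=\p_\mu(X_0^T=i\,|\,T>0)=\mu_i$ for every $i\in E_\Delta$, i.e. $\mu(0)=\mu$. Then integrating the above differential equation from $0$ to any finite $s>0$ yields
\[\mu(s)=\mu+\int_0^{s}e^{\alpha t}\bigl(\mu Q^T+\alpha(\mu-\delta)\bigr)P^T(t)\,dt.\]
Letting $s\to\infty$ and invoking the quasi-limiting hypothesis $\mu(s)\to\pi$, we obtain the announced formula, with the integral interpreted componentwise as the limit $\lim_{s\to\infty}\int_0^s$ of the partial integrals (this is precisely the sense of ``possibly improper'' flagged in the statement, since the factor $e^{\alpha t}$ prevents any absolute integrability in general).

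For the final assertion that $\pi$ is a quasi-stationary distribution on $E_\Delta$, I would simply invoke the classical result recalled in the paragraph immediately preceding the corollary: any quasi-limiting distribution is automatically quasi-stationary (see \cite{la,ma,me}). The only real subtlety in the argument is to be careful that the convergence $\mu(s)\to\pi$ holds coordinate-wise from the Yaglom limit definition $(\ref{5644})$, which is exactly what is needed to make the improper-integral identity rigorous coordinate by coordinate; no extra regularity on $P^T(t)$ is required thanks to $(\ref{8135})$ having already furnished the differentiability of $\mu(t)$ via Theorem \ref{main}.
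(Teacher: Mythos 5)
Your proof is correct and follows essentially the same route as the paper's: apply Theorem \ref{main} to get $\mu'(t)=e^{\alpha t}(\mu Q^T+\alpha(\mu-\delta))P^T(t)$, use $\mu(0)=\mu$ (from $\p_\mu(T>0)=1$) to integrate up to a finite time, let $t\to\infty$ via the Yaglom-limit hypothesis, and conclude quasi-stationarity from the classical fact that quasi-limiting distributions are quasi-stationary. Your added remarks on coordinate-wise convergence and the improper nature of the integral are consistent with, and slightly more explicit than, the paper's treatment.
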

\begin{proof}
Under these assumptions, it follows from Theorem \ref{main} that for all $t\ge0$, $\mu'(t)=e^{\alpha t}
(\mu Q^T+\alpha(\mu-\delta))P^T(t)$. Moreover, since $\p_{\mu}(T>0)=1$, $\mu(t)$ is continuous at 0 and $\mu(0)=\mu$,
so that
\[\mu(t)-\mu=\int_0^t (\mu Q^T+ \alpha(\mu-\delta))  e^{\alpha u} P^T(u)du\,.\]
Since $\mu(t)$ converges to a proper distribution $\mu$, as $t$ tends to $\infty$, it follows that the improper integral
$\int_0^\infty (\mu Q^T+ \alpha(\mu-\delta)) e^{\alpha u} P^T(u)du=\lim_{t\rightarrow+\infty}
\int_0^t (\mu Q^T+ \alpha(\mu-\delta)) e^{\alpha u} P^T(u)du$ exists and is finite. The fact that $\pi$ is a
quasi-stationary distribution follows from the results which are recalled before the statement of the corollary.
\end{proof}
\noindent Corollary \ref{coadd2} may be interpreted as follows: if $\mu$ is such that $T$ is exponentially distributed under
$\p_\mu$ and admits a Yaglom limit, then the correction term which allows us to obtain a quasi-stationary distribution from
$\mu$ is $\int_0^{\infty} (\mu Q^T+ \alpha(\mu-\delta)) e^{\alpha t} P^T(t)\,dt$.\\

The next results of this section show that whenever there exists a non quasi-stationary distribution which makes the
time $T$ exponentially distributed, then under some conditions, we may construct a whole family of distributions having
the same property.

\begin{proposition}\label{fam} Let $\mu$ be a distribution on $E_\Delta$ satisfying $(\ref{8135})$ and such that
$\p_{\mu}(T>t)=e^{-\alpha t}$, $t\ge0$, for some $\alpha>0$. Let us define the vector
$(\mu_{i}^{(1)})_{i\in E_\Delta}$, by
\begin{equation}\label{9445}
\mu_j^{(1)}=\frac{-1}{\alpha} \sum \limits_{i\in E_\Delta} \mu_iq^{T}_{ij}\,,\;\;\;j\in E\,,
\;\;\;\mu_\Delta^{(1)}=0\,.
\end{equation}
If for all $j\in E$,
\begin{equation}\label{9345}
0\le -\sum \limits_{i\in E_\Delta} \mu_iq^{T}_{ij}\le \alpha\,,
\end{equation}
then $(\mu_{i}^{(1)})_{i\in E_\Delta}$ is a distribution  on $E_\Delta$ which satisfies $\p_{\mu^{(1)}}(T>t)=e^{-\alpha t}$,
for all $t\ge0$.
\end{proposition}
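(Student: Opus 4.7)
My plan is to reduce the proposition to establishing two clean identities:
\[
(\mathrm{A})\quad (\mu(Q^T)^n)_\Delta = (-1)^{n+1}\alpha^n, \qquad
(\mathrm{B})\quad (\mu(Q^T)^n P^T(t))_\Delta = (-1)^{n+1}\alpha^n\, e^{-\alpha t}.
\]
Granted (A) and (B), the two conclusions of the proposition follow rapidly. Non-negativity of $\mu^{(n)}_j$ for $j \in E$ is a restatement of the left inequality in (\ref{9345}). For total mass one, I would invoke the conservativity of $Q^T$: from $Q^T\mathbf{1}=0$ one deduces $(Q^T)^n\mathbf{1}=0$ for every $n \ge 1$, so $\sum_{j\in E}(\mu(Q^T)^n)_j = -(\mu(Q^T)^n)_\Delta = (-1)^n\alpha^n$ by (A), giving $\sum_{j\in E}\mu^{(n)}_j = 1$. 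For the exponentiality of $T$ under $\p_{\mu^{(n)}}$, using $\mu^{(n)}_\Delta=0$ and $p^T_{\Delta\Delta}(t)=1$, a short manipulation yields
\[
(\mu^{(n)} P^T(t))_\Delta = \frac{(-1)^n}{\alpha^n}\bigl[(\mu(Q^T)^n P^T(t))_\Delta - (\mu(Q^T)^n)_\Delta\bigr] = 1 - e^{-\alpha t},
\]
which translates into $\p_{\mu^{(n)}}(T>t)=e^{-\alpha t}$.

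The identities (A) and (B) themselves I would prove by induction on $n$. The base case $n=1$ is essentially contained in Theorem \ref{main}: assertion (iii) of that theorem is precisely (A), and (B) follows by applying the Kolmogorov backward equation (\ref{1635}) to $\mu P^T(t)$ under the differentiability hypothesis (\ref{8135}), since $(\mu Q^T P^T(t))_\Delta = \tfrac{d}{dt}(\mu P^T(t))_\Delta = \tfrac{d}{dt}(1-e^{-\alpha t}) = \alpha e^{-\alpha t}$. For the inductive step, I would differentiate (B) at level $n-1$ once more in $t$: the right-hand side $(-1)^n\alpha^{n-1}e^{-\alpha t}$ produces $(-1)^{n+1}\alpha^n e^{-\alpha t}$, while applying the backward equation once more to the signed measure $\mu(Q^T)^{n-1}$ rewrites the left-hand side as $(\mu(Q^T)^n P^T(t))_\Delta$; this establishes (B) at level $n$, and evaluating at $t=0$ using $P^T(0)=I$ then yields (A) at level $n$.

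The main obstacle will be the technical justification of this iterated differentiation. At each inductive level one must pass $\tfrac{d}{dt}$ through an infinite sum, which amounts to requiring the signed measure $\mu(Q^T)^{m-1}$ to satisfy an analog of (\ref{8135}). In the finite-state case this is automatic. In the countable case one needs to combine (\ref{9345}) (which bounds $|\mu^{(m)}_j|$ and hence the total variation of $\mu(Q^T)^m$) with a summability assumption on the rates such as (\ref{8105}), in order to verify absolute convergence of the relevant double sums and so legitimize the interchange of derivative and summation at every step. This is the only place where real care is needed; once (A) and (B) are in hand, the rest is algebraic.
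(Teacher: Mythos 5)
Your argument is correct, and at its core it is the same mechanism as the paper's: both proofs repeatedly differentiate $\p_\mu(T\le t)=\sum_{i\in E}\mu_i p^T_{i\Delta}(t)=1-e^{-\alpha t}$ through the backward equation (\ref{1635}), and your identity (B) at level $n$, once combined with (A) as in your displayed manipulation, is exactly the paper's equation (\ref{4579}) with $\mu$ replaced by $\mu^{(n-1)}$. The differences are in the bookkeeping and in one genuine ingredient. The paper carries out the $n=1$ step, observes that the resulting identity is the exponentiality statement for $\mu^{(1)}$, and then ``iterates these arguments''; you instead prove closed-form identities for all $n$ by induction and extract both conclusions at the end --- logically equivalent, slightly tidier. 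The real divergence is the normalization: you obtain $\sum_{j\in E}\mu^{(n)}_j=1$ from conservativity via $(Q^T)^n\mathbf{1}=0$, whereas the paper lets $t\to\infty$ in (\ref{4579}) and uses monotone convergence, which is where the nonnegativity in (\ref{9345}) does real work in their version (in yours it is only used for $\mu^{(n)}_j\ge0$). Your route buys a cleaner algebraic statement but costs extra Fubini-type interchanges (to justify $(Q^T)^n\mathbf{1}=0$ and $\mu\bigl((Q^T)^n\mathbf{1}\bigr)=0$ when $E$ is countable) on top of the differentiability of $t\mapsto \mu(Q^T)^{n-1}P^T(t)$ that \emph{both} proofs need at every level of the iteration; you flag this last point explicitly, and it is fair to say the paper is equally silent about it, since only (\ref{8135}) for $\mu$ itself is assumed while the iteration tacitly requires its analogue for each $\mu^{(k)}$. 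So: correct, same underlying idea, with a different and somewhat more structural normalization argument, and a more honest accounting of the interchange issues in the infinite-state case.
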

\begin{proof} The assumption $\p_{\mu}(T>t)=e^{-\alpha t}$ is equivalent to
\begin{equation}\label{expo}
\sum \limits_{i \in E} \mu_ip^T_{i\Delta}(t)=1-e^{-\alpha t}\,.
\end{equation}
Using condition (\ref{8135}) and the Kolmogorov backward equation (\ref{1635}),
we obtain by differentiating the latter equality
\begin{eqnarray*}
\sum_{i\in E} \left( \sum_{j \in E_\Delta} q^T_{ij} p^T_{j\Delta}(t)\right)\mu_i &=&\alpha e^{-\alpha t}\,.
\end{eqnarray*}
Decomposing the left hand side and using (\ref{qmatrix}) and (\ref{alpha}), we obtain
\begin{eqnarray*}
\sum_{i\in E} \left( \sum_{j \in E_\Delta} q^T_{ij} p^T_{j\Delta}(t)\right)\mu_i &=&
\sum_{i\in E} \left( q^T_{i\Delta}+\sum_{j \in E} q^T_{ij} p^T_{j\Delta}(t)\right)\mu_i \\
&=&\alpha+\sum_{i,j \in E} q^T_{ij} p^T_{j\Delta}(t)\mu_i \\
&=&\alpha e^{-\alpha t}\,,
\end{eqnarray*}
which gives
\begin{equation}\label{4579}
\sum_{j\in E} p_{j\Delta}^T(t) \left(\frac{-1}{\alpha}\sum_{i\in E}\mu_iq^T_{ij}\right)=1-e^{-\alpha t}\,.
\end{equation}
Then from condition (\ref{9345}), we may let $t$ tend to $\infty$ in (\ref{4579}), in order to obtain by monotone
convergence that $\sum_{j\in E}\mu^{(1)}_j=1$, so that
\[\mu_j^{(1)}=\frac{-1}{\alpha}\sum_{i\in E}\mu_iq^T_{ij}\,,\;\;j\in E\,,\;\;\;\mu_\Delta^{(1)}=0\]
is a distribution on $E_\Delta$. Moreover
(\ref{4579}) is equation (\ref{expo}) where we have replaced $\mu$ by $\mu^{(1)}$, so that $\mu^{(1)}$ satisfies
$\p_{\mu^{(1)}}(T>t)=e^{-\alpha t}$.
\end{proof}

\begin{corollary}\label{2362} Let $\mu$ be a distribution on $E_\Delta$ and $\alpha>0$.  For $n\ge1$, let us denote by 
$q^{n,T}_{ij}$ the entries of $(Q^T)^n$ and define the vector $(\mu_{i}^{(n)})_{i\in E_\Delta}$, by
\begin{equation}\label{9445}
\mu_j^{(n)}=\frac{(-1)^n}{\alpha^n} \sum \limits_{i\in E_\Delta} \mu_iq^{n,T}_{ij}\,,\;\;\;j\in E\,,
\;\;\;\mu_\Delta^{(n)}=0\,.
\end{equation} 
Then,
\begin{itemize}
\item[$1.$]  $\mu^{(n)}$ is a quasi-stationary distribution associated to the rate $\alpha$, for some $n\ge1$, if and only if 
$\mu^{(k)}=\mu^{(k+1)}$, for all $k\ge n$.
\item[$2.$] Assume that for all $j\in E$, $\sum_{i\in E}q_{ij}^T<\infty$. 
If the sequence $(\mu^{(n)})$ converges, as $n \rightarrow \infty$, 
toward a proper distribution $\mu^{(\infty)}$, then $\mu^{(\infty)}$ is a quasi-stationary distribution.
\item[$3.$] Assume that $E$ is finite. If $\p_\mu(T>t)=e^{-\alpha t}$, $t\ge0$ and if for all 
$n\ge1$, 
\begin{equation}\label{5422}
0\le (-1)^n\sum \limits_{i\in E_\Delta} \mu_iq^{n,T}_{ij}\le \alpha^n\,,
\end{equation}  
then for all $n\ge1$, $(\mu_{i}^{(n)})_{i\in E_\Delta}$ is a distribution  on $E_\Delta$ which satisfies 
$\p_{\mu^{(n)}}(T>t)=e^{-\alpha t}$, for all $t\ge0$.
\end{itemize}
\end{corollary}
\begin{proof} The proof of the first part simply follows from the identity:
\begin{equation}\label{2345}
\mu^{(k+1)}_i= \frac{-1}{\alpha}\mu^{(k)}Q^T_i\,,\;\;\;i\in E\,,
\end{equation}
and Theorem \ref{pvj}.

The second assertion is a consequence of the same observation, which leads, by passing to the limit thanks to the assumptions to,
$\mu^{(\infty)}Q^T_i=-\alpha\mu^{(\infty)}_i$, $i\in E$. Then we conclude by applying Theorem \ref{pvj}.

Then the third part follows from Proposition \ref{fam} by induction. Indeed, first recall that since $E$ is finite, condition $(\ref{8135})$ is 
satisfied for any distribution. If the result is true for $\nu:=\mu^{(n)}$, then from the inequality 
$0\le (-1)^{n+1}\sum \limits_{i\in E_\Delta} \mu_iq^{n+1,T}_{ij}\le \alpha^{n+1}$ and identity (\ref{2345}), we derive that for all $j\in E$,
\[0\le -\sum \limits_{i\in E_\Delta} \nu_iq^{T}_{ij}\le \alpha\,,\]
so that from Proposition \ref{fam},  $\nu_j^{(1)}:=\frac{-1}{\alpha} \sum \limits_{i\in E_\Delta} \nu_iq^{T}_{ij}=\mu^{(n+1)}$ is a distribution  
on $E_\Delta$ which satisfies  $\p_{\nu^{(1)}}(T>t)=e^{-\alpha t}$, for all $t\ge0$.
\end{proof}
\noindent As we have already observed, if $\sup_{i\in E}q_i\le\alpha$, where
$q_i:=-q_{ii}^T$, then condition $(\ref{8135})$ is satisfied, but also for all $j\in E$,
\begin{equation}\label{8845}
(-1)^n\sum \limits_{i\in E_\Delta} \mu_iq^{n,T}_{ij}\le \alpha^n\,,
\end{equation}
which provides the second inequality in (\ref{5422}).  An interesting problem is then to determine simple conditions ensuring
the first inequality in (\ref{5422}), that is nonnegativity of the term $(-1)^n\sum \limits_{i\in E_\Delta} \mu_iq^{n,T}_{ij}$.

\section{Sufficient conditions for exponentiality.}\label{ept1}

Let us keep the notation of the previous sections. The next theorem provides sufficient conditions for a distribution $\mu$
to insure that $T$ is exponentially distributed under $\p_\mu$. As shown in Section \ref{exemples}, this result allows us to construct examples 
for which such distributions exist.
\begin{theorem}\label{th3}
Let $\{E_1,E_2,\dots\}$ be a finite or infinite partition of $S$ containing at least two elements and with $E_1=D$
$($in particular $\{E_2,E_3,\dots\}$ is a partition  of $E$$)$. Assume that:
\begin{itemize}
\item[$(i)$] For all $k\ge2$ and $l\ge1$ and for all $i\in E_k$, the quantity $\sum_{j\in E_l} q_{ij}$ does not depend on $i$. For $i\in E_k$, 
we set
\begin{equation}\label{3741}
\bar{q}_{kl}:=\sum_{j\in E_l} q_{ij}\,.
\end{equation}
\end{itemize}
Let $\mu$ is a distribution on $E_\Delta$,  with support in $E$. The following two conditions are equivalent.
\begin{itemize}
\item[$(ii)$]
For all $k\ge1$, the quantity $\mathbb{P}_{\mu}(X_t \in E_k \mid T>t)$ does not depend on $t\ge0$. More specifically,
we have,
\begin{equation}\label{3421}
\mathbb{P}_{\mu}(X_t \in E_k \mid T>t)=\bar{\mu}_k\,,\;\;\;t\ge0\,,
\end{equation}
where  $\bar{\mu}_k=\sum \limits_{i\in E_k} \mu_i$.
\item[$(iii)$] There exists $\alpha>0$, such that
\[\bar{\mu}\bar{Q}=-\alpha\bar{\mu}+\alpha{\rm d}\,,\]
where $\bar{Q}=(\bar{q}_{kl})_{k,l\ge1}$, $\bar{q}_{1k}=0$, for $k\ge1$,
 $\bar{q}_{kk}=-\sum_{l\ge1,\,l\neq k}\bar{q}_{kl}$, for $k\ge1$,
$\bar{\mu}=(\bar{\mu}_k)_{k\ge1}$ and ${\rm d}=(1,0,0,\dots)$.
\end{itemize}
Moreover, if conditions $(i)$ and $(ii)$ (or equivalently conditions $(i)$ and $(iii)$) are satisfied, then $T$ is
exponentially distributed under $\p_\mu$, with parameter $\alpha$  given by
\begin{equation}\label{5032}
\alpha=\sum_{k\ge1}\bar{q}_{k1}\bar{\mu}_k\,.
\end{equation}
\end{theorem}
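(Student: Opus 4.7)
The strategy is to reduce the problem to Theorem \ref{pvj} by \emph{lumping} the chain $X$ with respect to the partition $\{E_1, E_2, \dots\}$. Under hypothesis $(i)$, for every $k \ge 2$ and every $l \neq k$, the aggregated rate $\sum_{j \in E_l} q_{ij}$ from a state $i \in E_k$ into the class $E_l$ depends only on $(k,l)$; by conservativity, the same is true for $l = k$, and the common value equals the $\bar q_{kk}$ defined in $(iii)$. This is the classical lumpability condition, and it implies that the process $\bar X_t$ defined by $\bar X_t = k \iff X_t \in E_k$ is a continuous time Markov chain on $\{1, 2, \dots\}$ whose $q$-matrix is precisely the matrix $\bar Q$ from $(iii)$. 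Since $\bar q_{1k} = 0$ for every $k$, state $1$ is absorbing for $\bar Q$, so $\bar Q$ already plays the role of the killed $q$-matrix when one looks at $\bar X$ up to its first entry in state $1$.

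Next I would rephrase $(ii)$ and $(iii)$ in terms of $\bar X$. Because $E_1 = D$, the first passage time $T = \inf\{t \ge 0 : X_t \in D\}$ coincides with $\inf\{t \ge 0 : \bar X_t = 1\}$, and under $\p_\mu$ the initial law of $\bar X$ is $\bar\mu = (\bar\mu_k)_{k \ge 1}$ with $\bar\mu_1 = 0$. The identity
\[\p_\mu(X_t \in E_k \mid T > t) = \p_\mu(\bar X_t = k \mid T > t)\]
shows that $(ii)$ is nothing but the quasi-stationarity of $\bar\mu$, with respect to $T$, for the lumped chain $\bar X$. Applying Theorem \ref{pvj} to this countable Markov chain, with $\bar Q$ playing the role of $Q^T$ and the vector ${\rm d} = (1, 0, 0, \dots)$ playing the role of $\delta$, yields that such quasi-stationarity is equivalent to the existence of $\alpha > 0$ such that $\bar\mu \bar Q = -\alpha \bar\mu + \alpha {\rm d}$, which is exactly $(iii)$.

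Finally, the exponentiality of $T$ and the formula \eqref{5032} are immediate. Once $\bar\mu$ is quasi-stationary for $\bar X$, the standard Markov-property argument recalled just after \eqref{quasistat} in the introduction gives $\p_\mu(T > t) = e^{-\alpha t}$ for some $\alpha > 0$, and reading the first coordinate of the equality in $(iii)$ together with $\bar\mu_1 = 0$ yields
\[\alpha = \sum_{k \ge 1} \bar\mu_k \bar q_{k1},\]
which is \eqref{5032}. The main technical point to be careful about is the strong lumpability statement, namely that $\bar X$ is genuinely Markov under $\p_\mu$ with $q$-matrix $\bar Q$: this is the classical continuous time Kemeny--Snell lumping theorem, and it rests precisely on condition $(i)$, which forces the aggregated outgoing rates from each class to depend only on the class. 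Everything else is a direct translation of the problem to the lumped chain and an appeal to Theorem \ref{pvj}.
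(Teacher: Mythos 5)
Your strategy is exactly the paper's: lump $X$ over the partition, show the lumped (absorbed) process is Markov with $q$-matrix $\bar Q$, recognize $(ii)$ as quasi-stationarity of $\bar\mu$ for that chain, and invoke Theorem \ref{pvj}. The difference is that you dispose of the central technical step --- strong lumpability --- by citing the ``classical Kemeny--Snell lumping theorem,'' whereas this is precisely where the paper does all its work. Two points deserve more care. First, condition $(i)$ only constrains the rates out of states $i\in E_k$ for $k\ge 2$; it says nothing about the rates out of $E_1=D$, so the full lumped process $\bar X$ need not be Markov. What is Markov is the lumping of the \emph{killed} chain $X^T$ (equivalently, the absorbed process $Y^T_t=\bar X_t\ind_{\{t<T\}}+\ind_{\{t\ge T\}}$); you gesture at this by noting that state $1$ is absorbing for $\bar Q$, but the statement ``$\bar X$ is a Markov chain with $q$-matrix $\bar Q$'' as written is not what condition $(i)$ gives you. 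Second, and more substantively, the classical lumping theorem is a finite-state result; here both $S$ and the partition may be countably infinite. The paper proves lumpability for a finite partition by applying the Kolmogorov forward equation \eqref{1735} to $k_{il}(t)=\p_i(X_t\in E_l,\,t<T)$, obtaining $\frac{d}{dt}K(t)=K(t)\bar Q$ with $K(0)=\id$, and using uniqueness of the solution of this finite-dimensional ODE to conclude that $k_{il}(t)$ depends on $i$ only through its class; the countably infinite case is then handled by truncating the partition to $\{E_1\cup E_{n+1}\cup\cdots,\,E_2,\dots,E_n\}$ and passing to the limit. If you want your argument to stand on its own for infinite partitions, you need some version of this approximation (or another justification of lumpability in the countable setting), since uniqueness for the infinite-dimensional ODE is not automatic.

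Everything after the lumpability step is fine, and your derivation of \eqref{5032} by reading the first coordinate of $\bar\mu\bar Q=-\alpha\bar\mu+\alpha{\rm d}$ together with $\bar\mu_1=0$ is actually cleaner than the paper's detour through part $(iii)$ of Theorem \ref{main}; the standard Markov-property argument following \eqref{quasistat} then gives exponentiality of $T$ under $\p_\mu$ once you observe $\p_\mu(t<T)=\bar\p_{\bar\mu}(t<T)$.
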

\begin{proof} Let $(Y_t)_{t\ge0}$ be the continuous time process with values in $\mathbb{N}=\{1,2,\dots\}$ which is defined by
$Y_t=k$, if $X_t\in E_k$, that is
\[Y_t=\sum_{k\ge1}k\ind_{\{X_t\in E_k\}}\,,\;\;\;t\ge0\,.\]
Observe that $T=\inf\{t:Y_t=1\}$. Then under assumption $(i)$, the absorbed process
\begin{equation}\label{killedproc}
Y^T_t=\left\{\begin{array}{ll}
Y_t\,,\;\;\mbox{if $t<T$,}\\
1\,,\;\;\mbox{if $t\ge T$,}
\end{array}\right.
\end{equation}
is a continuous time Markov chain with $q$-matrix $\bar{Q}=(\bar{q}_{kl})_{k,l\ge1}$, as defined in $(iii)$.
See for instance Section 3.4 in \cite{ma}.

Then recall from $(iii)$, the definition of the measure $\bar{\mu}$ on $\mathbb{N}$: $\bar{\mu}_k=\sum_{i\in E_k}\mu_i$, $k\ge2$
and $\bar{\mu}_1=0$. Let $(\bar{\p}_k)_{k\ge1}$ be the family of probability laws associated to the Markov process $(Y_t)_{t\ge0}$.
For all $k\ge2$,
\begin{eqnarray}
\p_\mu(X_t\in E_k,t<T)&=&\sum_{i\in E}\mu_i\p_i(X_t\in E_k,t<T)\nonumber\\
&=&\sum_{l=2}^n\sum_{i\in E_l}\mu_i\p_i(X_t\in E_k,t<T)\nonumber\\
&=&\sum_{l=2}^n\bar{\mu}_l\bar{\p}_l(Y_t=k,t<T)\nonumber\\
&=&\bar{\p}_{\bar{\mu}}(Y_t=k,t<T)\,,\label{9373}
\end{eqnarray}
where the third equality follows from the fact that $\p_i(X_t\in E_k,t<T)=\bar{\p}_l(Y_t=k,t<T)$, for all $i\in E_l$.
Assume that condition $(ii)$ holds, then we derive from (\ref{3421}) and (\ref{9373}) that for all $k\ge2$,
\[\bar{\p}_{\bar{\mu}}(Y_t=k\,|\,t<T)=\bar{\mu}_k\,,\]
which means that $\bar{\mu}$ is a quasi stationary distribution with respect to the lifetime of the Markov process $Y^T$.
In particular, thanks to Theorem \ref{pvj}, $(ii)$ and $(iii)$ are equivalent. Moreover, since (\ref{eqadd3}) in
Theorem \ref{main} is satisfied, then from $(iii)$ in this theorem, $T$ is exponentially distributed under $\p_{\bar{\mu}}$,
with parameter $\alpha=\sum_{k=2}^n\bar{q}_{k1}\bar{\mu}_k$. We conclude from equality (\ref{9373}) which shows that
$\p_\mu(t<T)=\bar{\p}_{\bar{\mu}}(t<T)$.
\end{proof}

\noindent {\bf Remark.} {\it Let us focus on two very particular situations, where Theorem $\ref{th3}$ can be applied. First, in the particular 
case where the partition $\{E_2,E_3,\dots\}$ of $E$ is reduced to the singletons of $E$, then condition $(ii)$ is obviously satisfied and condition 
$(i)$ simply means that $\mu$ is quasi-stationary with respect to $T$, hence the conclusion  follows from Theorem $\ref{pvj}$. 

Then recall the definition of $\eta_i$ in $(\ref{2035})$. In contrast to the latter situation, by 
considering $\{E\}$ as a partition of $E$, it follows from Theorem $\ref{th3}$ that if there exists $\alpha>0$ such that $\eta_i=\alpha$, for all  $i\in E$, 
then the first passage time $T$ has an exponential distribution with parameter $\alpha$ under $\p_\mu$, for all initial distributions $\mu$  with support in 
$E$. This result follows also from direct arguments, see Proposition $\ref{prop1}$ below for instance. In the case where $S$ is finite, it is stated in 
Proposition $2.1$, $(ii)$ of \cite{ma}.  Note that, if in addition there is a Yaglom limit, $\mu$, as recalled in the previous section, then in this case, 
$\mu$ is explicitly given  on $E$ by
\[\mu=1_{\{i\}}+\int_0^\infty1_{\{i\}}(Q^T+\alpha I)P^T(t)e^{\alpha t}\,dt\,,\]
for all $i\in E$. In particular, this expression does not depend on $i$. Note that in this case, $\mu$ corresponds to the stationary distribution of the unkilled
process $X$. Finally, let us emphasize that from this particular situation, we can construct
examples where an initial distribution $\mu$ satisfies $(\ref{exponential})$ but not $(\ref{8135})$.}\\

Actually it is always possible to compare the distribution of $T$ with the exponential law, as Proposition \ref{prop1} shows.
It  provides exponential bounds for the distribution function of the first passage time.
\begin{proposition}\label{prop1} Define the rates $\alpha_0=\inf_{i\in E}\eta_i$ and $\alpha_1=\sup_{i\in E}\eta_i$,
where $\eta_i$ is defined in $(\ref{2035})$.
Then the tail distribution of the first passage time $T$ satisfies the inequalities:
\begin{equation}\label{encadr}
e^{-\alpha_1 t}\le \p_i(t<T)\le e^{-\alpha_0 t}\,,
\end{equation}
for all $t\ge0$ and for all $i\in E$.
\end{proposition}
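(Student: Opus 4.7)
The plan is to combine Lemma~\ref{rep} with a Feynman--Kac type conditioning on the trajectory of $\hat{X}$. By the lemma, $\p_i(t<T)=\hat{\p}_i(t<\hat{T})$, so it is enough to sandwich $\hat{\p}_i(t<\hat{T})$ between the two exponentials. I would first condition on the whole path $(\hat{X}_u)_{u\ge 0}$ under $\hat{\p}_i$; this freezes both the jump times $T_k$ and the visited states $\hat{X}_{T_k}$, and leaves only the clocks $\{\varepsilon(i):i\in E\}$ as sources of randomness.

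Inspecting the decomposition~(\ref{decompemtime}), for a fixed $t\ge 0$ and the random index $m$ defined by $T_m\le t<T_{m+1}$, the survival event decomposes as
\[\{\hat{T}>t\}=\bigcap_{k=0}^{m-1}\{\varepsilon(\hat{X}_{T_k})>I_{k+1}\}\,\cap\,\{\varepsilon(\hat{X}_{T_m})>t-T_m\};\]
every clock attached to an already-completed sojourn must not have rung, and the clock attached to the current state must survive until time $t$. By the mutual independence of the $\varepsilon(i)$'s from $\hat{X}$ and from one another, together with the telescoping identity $\eta_{\hat{X}_{T_k}}I_{k+1}=\int_{T_k}^{T_{k+1}}\eta_{\hat{X}_u}\,du$, this yields
\[\hat{\p}_i\bigl(t<\hat{T}\,\big|\,(\hat{X}_u)_{u\ge 0}\bigr)=\exp\!\Bigl(-\int_0^t\eta_{\hat{X}_u}\,du\Bigr).\]

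From here the inequalities~(\ref{encadr}) are immediate: since $\alpha_0\le\eta_j\le\alpha_1$ for every $j\in E$, the integrand is pinched between these two constants, so the conditional survival probability lies between $e^{-\alpha_1 t}$ and $e^{-\alpha_0 t}$ pathwise, and this sandwich is preserved upon taking expectations. The only step requiring real care is the bookkeeping that produces the conditional survival formula from~(\ref{decompemtime})---in particular, the borderline case $m=0$, where the product over $k<m$ is empty and the expression reduces to $e^{-\eta_i t}$---but beyond that the argument is a straightforward monotonicity observation, so I do not anticipate any genuine obstacle.
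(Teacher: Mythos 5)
Your proof is correct and takes essentially the same route as the paper's: both reduce to Lemma~\ref{rep}, decompose the survival event according to which sojourn interval of $\hat X$ contains $t$, and use the independence of the exponential clocks to arrive at the Feynman--Kac integrand $\exp\bigl(-\int_0^t\eta_{\hat X_u}\,du\bigr)$ --- which is exactly the quantity $e^{-\eta(X'_0)I_1-\cdots-\eta(X'_{k-1})I_k}e^{-\eta(X'_k)(t-T_k)}$ that the paper isolates inside the expectation --- before pinching $\eta$ between $\alpha_0$ and $\alpha_1$. Conditioning on the whole path of $\hat X$ at the outset merely streamlines the paper's explicit summation over $n$ and over the visited states, and yields both bounds of~(\ref{encadr}) at once.
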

\begin{proof} By definitions (\ref{2035}) and (\ref{qmatrix}), we obtain that $\alpha_0\le q_{k\Delta}\le \alpha_1$, for all $k$. From 
these inequalities and Kolmogorov's forward equation at state $i\in E$ and $\Delta$, i.e.
\[\frac{d}{dt}p^T_{i\Delta}(t)=\sum_{k\in E}p^T_{ik}(t)q_{k\Delta}\,,\]
we derive that,
\[\alpha_0 \p_i(t<T)\le \frac{d}{dt}\p_i(t<T)\le \alpha_1\p_i(t<T)\,.\]
The result follows immediately. 
\end{proof}

\section{Examples and application}\label{exemples}

\subsection{Two examples of exponentality} With the aim of illustrating the previous results, we 
provide in this subsection two examples of  non quasi-stationary distributions $\mu$ such that $T$ is exponentially distributed under $\p_\mu$.\\

\noindent {\it An  example when the state space is finite}: With the same notations as in Theorem \ref{th3}, let $S=\{1,2,3,4,5,6\}$, 
$E_1=D=\{1\}$, $E_2=\{2,3\}$, $E_3=\{4,5\}$, $E_4=\{6\}$ and let us define the $q$-matrix,
\[Q=\left(\begin{array}{cccccc}
-6&0&2&1&1&2\\
1&-8&1&1&2&3\\
1&0&-7&2&1&3\\
1&1&1&-8&2&3\\
1&0&2&1&-7&3\\
1&1&0&1&1&-4
\end{array}
\right)\,,\]
which clearly satisfies the general conditions of this paper, see Section \ref{int}, as well as condition 
$(i)$ of Theorem \ref{th3}.  Let $Q'$ be the $q$-matrix $Q$ (or equivalently $Q^T$), to which the 
first line and the first column have been removed and let $\bar{Q}'$ be the $q$-matrix $\bar{Q}$ to which the 
first line and the first column have been removed, that is
\[Q'=\left(\begin{array}{ccccc}
-8&1&1&2&3\\
0&-7&2&1&3\\
1&1&-8&2&3\\
0&2&1&-7&3\\
1&0&1&1&-4
\end{array}
\right)\;\;\;\mbox{and}\;\;\;\bar{Q}'=\left(\begin{array}{ccc}
-7&3&3\\
2&-6&3\\
1&2&-4
\end{array}
\right)\,.\]
The Perron-Frobenius eigenvalue of $\bar{Q}'$ is $\lambda=-1$ and the associated normalized left eigenvector is $\nu=(3/16,5/16,1/2)$. In 
particular, $\nu\bar{Q}'=-\nu$, so that $\bar{\mu}=(0,3/16,5/16,1/2)$ and $\bar{Q}$ satisfy condition $(iii)$ of Theorem \ref{th3} 
with $\alpha=1$.

Then Theorem \ref{th3} asserts that the initial distribution $\mu=(0,3/32,3/32,5/32,5/32,1/2)$ is such that under $\p_\mu$, the emergence time $T$ 
is exponentially distributed with parameter 1. Moreover, since $\mu Q^T=(0,-3/32,-3/32,-5/16,0,-1/2)$, then relation (\ref{6457}) in Theorem \ref{pvj} 
cannot be satisfied for any $\alpha>0$, and hence $\mu$ is not a quasi-stationary distribution. 

Also, note that $\mu$ satisfies conditions of Proposition \ref{fam}. Then with the notation of this proposition, we have
$\mu^{(1)}=(0,3/32,3/32,5/16,0,1/2)$, so that condition (\ref{9345}) is satisfied and from this proposition, $\mu^{(1)}$ is another distribution 
such that under $\p_{\mu^{(1)}}$, $T$ is exponentially distributed with parameter 1. Moreover, we can check as above that $\mu^{(1)}$ is not a 
quasi-stationary  distribution.\\

\noindent {\it Exponentiality in $\mathbb{Z}^d$-valued L\'evy processes.} In this example the state space is $S=\mathbb{Z}^d$, with $d\ge2$
and $X$ is a $d$-dimensional compound Poisson process. In particular, $X$ issued from 0, can be represented on some probability space 
$(\Omega,\mathcal{F},\p)$, as, 
\[X_t=\sum_{k=0}^{N_t}\xi_k\,,\;\;\;t\ge0\,,\]
where $(\xi_k)_{k\ge1}$ is a sequence of i.i.d.~random variables with distribution on $\mathbb{Z}^d\setminus\{0\}$, $\xi_0=0$ and 
$(N_t)_{t\ge0}$ is a standard Poisson process, that is independent of the sequence $(\xi_k)_{k\ge1}$. As usual, $\p_x$, $x\in\mathbb{Z}^d$ will 
denote the family of probability measures such that $X$ starts from $x$ under $\p_x$.

Then assume that there is a linear transformation $M:\mathbb{Z}^d\rightarrow \mathbb{Z}^d$, such that the coordinates of the compound 
Poisson process $K_t:=MX_t$, $t\ge0$, are independent and non degenerate. 
Assume moreover that for some vector $u=(u_1,\dots,u_d)\in\mathbb{Z}^d$, whose $d'$ coordinates ($0<d'<d$) are equal to 0,
the support of the distribution of ${}^tuM\xi_1$, is a set of the form $\{-a,-a+1,\dots,-1,1,\dots,b-1,b\}$, 
for some $0<a,b<\infty$ and that $E({}^tuM\xi_1)<0$. Note then that the L\'evy process
\[Y_t:={}^tuMX_t=\sum_{k=0}^{N_t}{}^tuM\xi_k\,,\;\;\;t\ge0\,,\]
satisfies the conditions of Definition 1 in \cite{kp},  except that it is lattice. However, as noticed just after this 
definition, we can check from an analogous result in \cite{bd} that Theorem 1 in \cite{kp} is still valid in the lattice case. Fix an integer $k>0$ and let,
\[T=\inf\{t:Y_t< -k\}\,.\] Then Theorem 1 in \cite{kp} asserts that there exists a quasi-stationary distribution for $Y$, with respect to $T$. Let us 
denote by $\nu$ this distribution and let $\mu$ be a measure such that:
\begin{itemize}
\item[$(i)$] $\mu=\theta M^{-1}$, where  
$\theta=\theta_1\otimes\theta_2\otimes\dots\otimes\theta_d$ is a product probability measure on $\mathbb{Z}^d$ and $\theta M^{-1}$ is the image 
of $\theta$ by $M$, 
\item[$(ii)$] $\nu=\mu A^{-1}$, where  $A$ is the linear transformation, $Ax={}^tuMx$, $x\in\mathbb{Z}^d$.
\end{itemize}
Let $P_x$, $x\in\mathbb{Z}$ be the probability measure under which $Y$ starts from $x$, then from $(ii)$ and the quasi-stationarity of $\nu$, 
we obtain
\[\p_\mu(T>t)=P_\nu(T>t)=e^{-\alpha t}\,,\;\;\;\mbox{for some $\alpha>0$.}\] 
Observe that the time $T$ can be expressed as
\[T=\inf\{t:X_t\in D\}\,,\;\;\mbox{where}\;\;\;D=\{x\in \mathbb{Z}^d:{}^tuMx<-k\}\,. \]
Then let us show that $\mu$ is not quasi-stationary with respect to  time $T$. Let  $v=(v_1,\dots,v_d)\in\mathbb{Z}^d$ be another vector such that  $v_i=0$ if 
$u_i\neq 0$, for $i=1,\dots,d$ and set $Z:={}^tvMX$. Assume that $Z$ is not a degenerate process. Then by construction of $u$, $v$ and 
$\mu$,  the compound Poisson processes $Y$ and $Z$ are independent under $\p_\mu$. 
It follows that for  all $i\in\mathbb{Z}$ and $t\ge0$,
\[\p_\mu(Z_t=i\,|\,T>t)=\p_\mu(Z_t=i)\,.\]
If $\mu$ was quasi-stationary, then the last expression would be equal to $\mu B^{-1}(i)$, where $B:={}^tvM$,
hence $\mu B^{-1}$ would be a stationary distribution for the compound Poisson process $Z$. But such a distribution does not exist, as  is well 
known.

Note that this situation becomes trivial when the coordinates of the Poisson process $X=(X^{(1)},\dots,X^{(d)})$ are independent, that is 
$M=Id$. Let us chose $Y$ and $Z$ as follows,  $Y=X^{(1)}$ and $Z=X^{(2)}$. Then any measure $\mu$ of the form 
$\mu:=\nu\otimes\theta_2\otimes\dots\otimes\theta_d$,
where $\nu$ is the quasi-stationary distribution associated to $Y$ as above, is such that (\ref{exponential}) holds, although it is not quasi-stationary.\\

\noindent{\bf Remarks} {\it Contrary to the situations that are described just above, it may sometimes happens that exponentiality implies 
quasi-stationarity. Here are a couple of examples. 
\begin{itemize}
\item[$(i)$] Let $X$ be a birth and death process with birth rate $\lambda_n=n\lambda$ and death rate $\nu_n=n\nu$, when the process is 
in state $n$. In this case,  $S$ is the set $\{0,1,\dots\}$ of nonnegative integers.  Set $D=\{0\}$ and recall the definition of the first passage time,
\[T=\inf\{t:Z_t=0\}\,,\]
which is an {\it absorption time} in the present case. It is well known that, if $\nu>\lambda$, then $\p_k(T<\infty)=1$, 
for all $k\ge1$ and from the branching property, we have for all $k\in E=\{1,2,\dots\}$ and all $t>0$,
\begin{eqnarray}
\p_k(T\le t)=[\p_1(Z_t=0)]^k\,.\label{3261}
\end{eqnarray}
Le $q_t:=\p_1(Z_t=0)$ be the extinction probability, then from $(\ref{3261})$, for any probability measure $\mu$ on $E$,
the quantity $\p_\mu(Z_t=0)=\p_\mu(T\le t)$ corresponds to the generating function $G_\mu$ of $\mu$, evaluated at $q_t$, that is
\begin{eqnarray}
\p_\mu(T\le t)=G_\mu(q_t)\,.\label{4560}
\end{eqnarray}
In \cite{an}, p.$109$, we can find the  expression:
$q_t=\frac{\nu e^{(\nu-\lambda)t}-\nu}{\nu e^{(\nu-\lambda)t}-\lambda}$,
so that if $\mu_{\alpha}$ is a distribution which satisfies  $\p_{\mu_\alpha}(T> t)=e^{-\alpha t}$, for some $\alpha>0$, then
from $(\ref{4560})$, its generating function is given by:
\[G_{\mu_\alpha}(t)=1-\left(\frac{\nu-\lambda t}{\nu(1-t)}\right)^{\frac{-\alpha}{\nu-\lambda}}\,,\;\;\;t\in[0,1)\,.\]
This shows that for any $\alpha>0$ there is a unique distribution satisfying $\p_{\mu_\alpha}(T>t)=e^{-\alpha t}$. In the
case of continuous state branching processes, a similar expression for the Laplace transform of $\mu_\alpha$ has been obtained in
\cite{la1}, see p. 438 therein. 
\item[$(ii)$] In the case where $S$ is a finite set, another example where exponentiality implies quasi-stationarity is
given in part $(iii)$ of Proposition $2.1$ of \cite{ma}. The Markov chain that is considered in this work is a random walk in
the finite set $\{0,1,\dots,N\}$ that is killed at $0$.
\item[$(iii)$] In the case of continuous state space Markov processes, other examples where exponentiality implies quasi-stationarity
may be found in \cite{hr}. In this work it is proved that if the absorption time of a positive selfsimilar Markov process is
exponentially distributed under some initial distribution, then the latter is necessarily quasi-stationary.
\end{itemize}}

\subsection{Application to the emergence time of a mutant escaping treatment.}\label{ept1}

Let us consider the case of a pathogen population living on a host population. At each time $t$,  the whole host population
is either treated or not. A pathogen individual can mutate to defeat the treatment. We assume that each pathogen has the same mutation rate 
during a reproduction. Then, the probability that at least one pathogen mutates in the population is proportional to the pathogen population size. 
Since a treatment controls the pathogen population size, we assume that the latter takes two different values according to the 
presence or absence of the treatment. Thus, the mutant emergence rate takes two different values. In presence of the treatment, the pathogen 
population size is low, then the mutant emergence rate is low. In absence of treatment, the pathogen population size is high, then the mutant 
emergence rate is high.Then, the dynamics of the pathogen population size is described as a Markov chain $X$ whose state space $S$ is split up 
in three parts, that is $S=E_1\cup E_2\cup E_3$, with~:
\begin{itemize}
\item[.] $E_1$, the set of values of the pathogen population size when the population contains at least one mutant,
\item[.] $E_2$, the set of values of the pathogen population size when the population contains no mutants and its size is less than
a given value $K$,
\item[.] $E_3$, the set of values of the pathogen population size when the population contains no mutants and its size is greater than $K$.
\end{itemize}
The transition rates from $E_i$ to $E_j$, $1\le i,j\le 3$ are denoted by $\bar{q}_{ij}$, in accordance with the 
notation of  Theorem \ref{th3}. The set $E_2$ corresponds to the presence of treatment and in this case, the number of pathogens is low.  
The set $E_3$ corresponds to the absence of treatment and the number of pathogens is  high. In each case, the number of pathogens does not fluctuate 
very much, so that we can assume that the transition rates $\bar{q}_{23}$ and $\bar{q}_{32}$ 
between $E_2$ and $E_3$ are constant. They depend only on the treatment strategy, that is on the choice to use a treatment or not at time $t$. 
Then for the same reasons both mutant emergence rates $\bar{q}_{21}$ and $\bar{q}_{31}$ are supposed to be constant. From the present model, 
$\bar{q}_{21}$ should be much  lower than $\bar{q}_{31}$. The emergence time is then defined as $T= \inf \{t \geq 0 : X_t \in E_1\}$.\\

Let us consider a treatment strategy ensuring that $\mu(E_2)=\bar{\mu}_2$ is the probability for the pathogen population size to be less than $K$ before 
a mutation occurs. Similarly, the probability for the size to be greater than $K$ before mutation, is
$\mu(E_3)=\bar{\mu}_3=1-\bar{\mu}_2$.
From Theorem \ref{th3}, $T$ is exponentially distributed with parameter $\alpha>0$, if $\bar{\mu}$ solves the equation~:
\begin{equation}\label{lawqs}
\bar{\mu} \bar{Q}^T=-\alpha \bar{\mu}\,,
\end{equation}
with
\begin{equation*} \bar{Q}^T=\left( \begin{matrix}
-\bar{q}_{23}-\bar{q}_{21} &  \bar{q}_{23}   \\
\bar{q}_{32} & -\bar{q}_{32}-\bar{q}_{31} \\
\end{matrix} \right)\,.\end{equation*}
Let us set $\alpha=\bar{\mu}_2\bar{q}_{21} + \bar{\mu}_3\bar{q}_{31}$ and
\[\bar{\mu}_2=\frac{\bar{q}_{21}-\bar{q}_{31}+\bar{q}_{23}+\bar{q}_{32}-\sqrt{(\bar{q}_{21}-\bar{q}_{31}+\bar{q}_{23}-\bar{q}_{32})^2+
4\bar{q}_{23}\bar{q}_{32}}}{2(\bar{q}_{21}-\bar{q}_{31})}\,.\]
Then we can check that $\bar{\mu}=(\bar{\mu}_2,\bar{\mu}_3)$ is a solution of (\ref{lawqs}). Therefore, with this choice for $\alpha$ and
$\bar{\mu}$, the time $T$ is exponentially distributed with parameter $\alpha>0$.\\

From a biological point of view, these results may be interpreted as follows. The rate $\bar{\mu}_2$ represents the proportion
of time during which the host population has been treated. Then from this proportion of time, we can determine the distribution of the emergence 
time of a mutant pathogen.

\vspace*{.5in}
\noindent {\bf Acknowledgement} We are very grateful to Professor Servet Mart\'{\i}nez to have pointed out the reference \cite{ma} and
given access to its preliminary version. We also thank the referees for their constructive remarks, and especially one of them for the short proof
of Proposition \ref{prop1}.

\newpage

 \end{document}